\newtheorem{thm}{Theorem}
\theoremstyle{definition}
\newtheorem{rem}[thm]{Remark}
\numberwithin{table}{thm}
\title[Small gaps between almost primes]{Small gaps between almost primes, the parity problem, and some conjectures of
Erd\H{o}s on consecutive integers II}
\author[Goldston, Graham, Panidapu, Pintz, Schettler, and Y{\i}ld{\i}r{\i}m]{Daniel A. Goldston, Sidney W. Graham, Apoorva Panidapu, Janos Pintz$^{*}$, Jordan Schettler, and Cem Y. Y{\i}ld{\i}r{\i}m}
\date{}
\thanks{$^{*}$ Research supported by the National Research Development and Innovation Office,
NKFIH, K 119528}
\begin{document}

\maketitle

\section{Introduction}

This paper is intended as a sequel to \cite{Gold2} written by four of the coauthors here. In the paper, they proved a stronger form of the Erd\H{o}s-Mirksy 
conjecture which states that there are infinitely many positive integers $x$ such that $d(x)=d(x+1)$ where $d(x)$ denotes the number of divisors of $x$. This 
conjecture was first proven by Heath-Brown in 1984 \cite{Heat}, but the method did not reveal the nature of the set of values $d(x)$ for such $x$. In particular, one 
could not conclude that there was any particular value $A$ for which $d(x)=d(x+1)=A$ infinitely often. In \cite{Gold2}, the authors showed that there are infinitely 
many positive integers $x$ such that both $x$ and $x+1$ have exponent pattern $\{2,1,1,1\}$, so
\begin{equation}\label{24}
d(x)=d(x+1)=24 \mbox{ for infinitely many positive integers $x$.}
\end{equation}
Similar results were known for certain shifts $n$, i.e., $x$ and $x+n$ have the same exponent pattern infinitely often. This was 
done for shifts $n$ which are either even or not divisible by the product of a pair of twin primes. The goal of this paper is to give simple proofs of results on 
exponent patterns for an arbitrary shift $n$.

\section{Notation and Preliminaries}

For our purposes, a \emph{linear form} is an expression $L(m) = am + b$ where
$a$ and $b$ are integers and $a > 0$. We view $L$ both as a polynomial and as a function in $m$. We say $L$ is \emph{reduced} if $\gcd(a,b) = 1$. 
If $K(m) = cm+d$ is another linear form, then a \emph{relation} between $L$
and $K$ is an equation of the form $|c_L \cdot L - c_K \cdot K| = n$ where
$c_L$,$c_K$, $n$ are all positive integers. We call $c_L$, $c_K$ the \emph{relation coefficients} and we call $n$ the \emph{relation value}.
We define the \emph{determinant} of $L$ and $K$ as $\det(L,K)=|ad-bc|$.

For
a prime $p$, a $k$-tuple
of linear forms $L_1, L_2, \ldots, L_k$ is called \emph{$p$-admissible} if
there is an integer $t_p$ such that
\begin{equation*}
L_1(t_p)L_2(t_p)\cdots L_k(t_p) \not\equiv 0 \pmod{p}
\end{equation*}
We say that a $k$-tuple of linear forms is \emph{admissible} if it is
$p$-admissible for every prime $p$. Note that a $k$-tuple of linear forms
is admissible iff all the forms are reduced and the tuple is $p$-admissible for every prime $p\leq k$.

An \emph{$E_r$ number} is a positive integer that is the product of
$r$ distinct primes. Several of the coauthors here proved the following result on $E_2$-numbers in admissible triples in \cite{Gold1}. Later, Frank Thorne \cite{Thor}
obtained a generalization for $E_r$-numbers with $r\geq 3$.

\begin{thm}\label{basic}
Let $C$ be any constant. If $L_1$, $L_2$, $L_3$ is an admissible triple of linear forms, 
then there are two among them, say $L_j$ and $L_k$ such that
both $L_{j}(x)$ and $L_k(x)$ are $E_2$-numbers with both prime factors larger than $C$ for infinitely many $x$.
\end{thm}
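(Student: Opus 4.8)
The plan is to prove Theorem~\ref{basic} using the sieve-theoretic machinery behind the GPY/Maynard--Tao method for small gaps between primes, adapted to detect $E_2$-numbers simultaneously in two of the three forms. The strategy rests on the parity phenomenon: a sieve weighted to detect numbers with few prime factors cannot, on its own, distinguish primes from $E_2$-numbers, but when we run the weighted sieve on three admissible forms the parity obstruction forces at least two of them to take $E_2$-values together. Concretely, I would attach to each integer $x$ in a dyadic range $[N,2N]$ a nonnegative weight of the form

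\begin{equation*}
w(x) = \Bigl(\sum_{d \mid \prod_i L_i(x),\ d \le R} \lambda_d\Bigr)^2,
\end{equation*}

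where the $\lambda_d$ are the optimized (smoothed, piecewise-polynomial) sieve coefficients supported on squarefree $d \le R = N^{\theta}$ with $\theta$ as large as the available level of distribution permits. The forms being admissible guarantees that the singular series does not vanish, so $\sum_x w(x) \gg N (\log N)^{k}$ for the appropriate exponent.

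The core of the argument is a weighted count. For each form $L_i$ I would bound from above the weighted number of $x$ for which $L_i(x)$ is divisible by a prime in a suitable intermediate range, or for which $L_i(x)$ has a prime factor below $C$, using standard upper-bound sieve estimates against the weight $w(x)$. Subtracting these ``bad'' contributions from the total mass $\sum_x w(x)$, I would show that the surviving weighted sum is positive, so there exist $x$ at which each $L_i(x)$ has all prime factors exceeding $C$ and has at most two prime factors (i.e.\ each is an $E_1$- or $E_2$-number), with the number of prime factors controlled. First I would set up the sum $S = \sum_x w(x)\bigl(\rho - \sum_i \mathbf{1}[L_i(x)\text{ has too many prime factors or a small factor}]\bigr)$ for an appropriate threshold $\rho$, and reduce the theorem to showing $S > 0$. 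At this point genuinely three forms are needed: a two-form version would be defeated by the possibility that one form is prime and the other composite, but with three forms a counting/parity argument (of the type made rigorous by Greaves' and Heath-Brown's treatments of the parity problem) shows at least two must be $E_2$ simultaneously.

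The main obstacle is the parity problem itself: pure sieve methods detect an even surplus of prime factors but cannot pin down the exact number, so the delicate point is engineering the weight and the three-form structure so that the parity barrier works \emph{in our favor}, guaranteeing a \emph{pair} of $E_2$-numbers rather than merely one almost-prime. I expect the technical heart to lie in verifying the required level-of-distribution input (a Bombieri--Vinogradov type estimate for the product form $\prod_i L_i$) and in the combinatorial bookkeeping that converts the positivity of $S$ into the simultaneous $E_2$-conclusion for two specified forms among the three. Since Theorem~\ref{basic} is quoted from \cite{Gold1} (with the $E_r$ generalization in \cite{Thor}), I would ultimately cite those papers for the full sieve computation, and here only indicate how admissibility feeds into the nonvanishing of the singular series and how the three-form parity mechanism yields the paired $E_2$-conclusion.
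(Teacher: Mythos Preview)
The paper does not prove Theorem~\ref{basic} at all: it is simply quoted as a result of \cite{Gold1} (with the $E_r$ extension attributed to \cite{Thor}), and no argument is supplied. Your final sentence---that you ``would ultimately cite those papers for the full sieve computation''---is therefore exactly what the paper does, and is the correct disposition of this statement.

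That said, the heuristic sketch you offer before that citation is somewhat off-target, and if you intend to include it as motivation you should tighten it. The result in \cite{Gold1} predates and does not use the Maynard--Tao multidimensional sieve; it is a GPY-type argument with a short divisor-sum weight
\[
\Lambda_R(n;\ell) \;=\; \frac{1}{\ell!}\sum_{\substack{d\mid n\\ d\le R}}\mu(d)\Bigl(\log\tfrac{R}{d}\Bigr)^{\ell},
\]
and the key input is a Bombieri--Vinogradov theorem for $E_2$-numbers (level of distribution up to $x^{1/2-\varepsilon}$). More importantly, your description of the ``parity phenomenon'' inverts the logic: the parity problem is an \emph{obstruction} preventing the sieve from producing primes in the forms, not a mechanism that ``forces'' two forms to be $E_2$ together. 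What actually happens in \cite{Gold1} is that one computes, for the weight $w(x)$, the ratio of the weighted count of $x$ with $L_i(x)$ an $E_2$-number (prime factors $>C$) to the total weighted count; with three admissible forms this ratio exceeds $1$ when summed over $i$, so by pigeonhole two of the $L_i(x)$ are simultaneously $E_2$. The Greaves/Heath-Brown parity analysis you invoke is not the engine here. If you keep a sketch, replace the parity narrative with this pigeonhole-on-weighted-averages picture and drop the reference to Maynard--Tao.
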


The results obtained in this paper will use Theorem \ref{basic} above in combination with Theorem \ref{adjoin} below, a special
case of which was proven in the previous paper \cite{Gold2}. We provide a proof here
of the general version since it contains important ideas relevant for the rest of the paper.

\begin{thm}[Adjoining Primes]\label{adjoin}
Assume that $L_i = a_i m+b_i$ for $i=1, \ldots, k$ gives an 
admissible $k$-tuple with
relations $|c_{i,j}L_i-c_{j,i}L_j| = n_{i,j}$. We can always 
``adjoin'' prime factors to the relation coefficients
without changing the relation values: for every choice of positive integers $r_1$, $r_2$, $\ldots$, $r_k$ such that $\gcd(r_i,a_i) = \gcd(r_i, \det(L_i,L_j))=\gcd(r_i,r_j)=1$ whenever $i\neq j$, there is an admissible 
$k$-tuple of linear forms $K_1,K_2, \ldots, K_k$ with relations
$|c_{i,j}r_iK_i-c_{j,i}r_jK_j| = n_{i,j}$.
\end{thm}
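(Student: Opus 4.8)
The plan is to realize every $K_i$ as a rescaled pullback of $L_i$ under a single affine substitution. Writing the relation $|c_{i,j}L_i - c_{j,i}L_j| = n_{i,j}$ as an identity of linear forms in $m$ shows that the coefficient of $m$ must vanish and the constant term must have absolute value $n_{i,j}$; that is, $c_{i,j}a_i = c_{j,i}a_j$ and $|c_{i,j}b_i - c_{j,i}b_j| = n_{i,j}$. So it suffices to produce linear forms $K_i = A_i m' + B_i$ with $c_{i,j}r_iA_i = c_{j,i}r_jA_j$ and $|c_{i,j}r_iB_i - c_{j,i}r_jB_j| = n_{i,j}$, together with admissibility of the tuple.

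First I would set $R = r_1r_2\cdots r_k$ and look for a substitution $\psi(m') = Rm' + \beta$ so that $r_i$ divides every coefficient of $L_i(\psi(m')) = a_iR\,m' + (a_i\beta + b_i)$, then define $K_i := \tfrac{1}{r_i}\,L_i\circ\psi$. Because $\gcd(r_i,a_i)=1$, divisibility of the leading coefficient $a_iR$ by $r_i$ is automatic, while divisibility of the constant term forces the congruences $a_i\beta + b_i \equiv 0 \pmod{r_i}$; since the $r_i$ are pairwise coprime, the Chinese Remainder Theorem produces a single $\beta$ satisfying all of them. With this $\psi$, each $K_i$ is an honest linear form with positive leading coefficient, and since $c_{i,j}r_iK_i = c_{i,j}(L_i\circ\psi)$ one computes $c_{i,j}r_iK_i - c_{j,i}r_jK_j = (c_{i,j}L_i - c_{j,i}L_j)\circ\psi$, a constant equal to $\pm n_{i,j}$; hence the relations transfer with the adjoined coefficients and unchanged values.

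It remains to check that $K_1,\ldots,K_k$ is reduced and $p$-admissible for every prime $p$. For reducedness, if a prime $q$ divided both $A_i$ and $B_i$, then $q\nmid a_i$ (else $q\mid b_i$, contradicting that $L_i$ is reduced), so $q\mid r_\ell$ for some $\ell\neq i$; the congruences defining $\beta$ give $a_i\beta + b_i \equiv 0$ and $a_\ell\beta + b_\ell\equiv 0 \pmod q$, and eliminating $\beta$ yields $q \mid a_ib_\ell - a_\ell b_i = \pm\det(L_i,L_\ell)$, contradicting $\gcd(r_\ell,\det(L_i,L_\ell))=1$. For $p$-admissibility I would split on whether $p\mid R$. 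When $p\nmid R$, the map $\psi$ is a bijection modulo $p$ and each $r_i$ is invertible, so a residue avoided by all the $L_i$ pulls back under $\psi$ to one avoided by all the $K_i$, and $p$-admissibility transfers verbatim.

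The hard part is the degenerate case $p\mid R$, where $\psi$ collapses to the constant $\beta$ modulo $p$. Here exactly one index $\ell$ has $p\mid r_\ell$. For $i\neq\ell$ the leading coefficient $A_i = a_i\prod_{j\neq i}r_j$ is divisible by $p$, so $K_i\equiv B_i$ is constant modulo $p$; the hypothesis $\gcd(r_\ell,\det(L_i,L_\ell))=1$ together with $a_\ell\beta + b_\ell\equiv 0$ forces $a_i\beta + b_i\not\equiv 0$, whence $B_i\not\equiv 0\pmod p$, so these forms never vanish. For the single remaining form $K_\ell$, one checks $p\nmid A_\ell$ (using $\gcd(r_\ell,a_\ell)=1$ and pairwise coprimality), so $K_\ell$ is a bijection modulo $p$ and misses $0$ at all but one residue; choosing $t_p$ to avoid that residue makes $\prod_i K_i(t_p)\not\equiv 0\pmod p$. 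Thus $(K_i)$ is admissible, completing the construction. I expect the bookkeeping at primes dividing $R$ to be the only genuinely delicate point, and it is precisely there that all three coprimality hypotheses are consumed.
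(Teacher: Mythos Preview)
Your proof is correct and follows essentially the same strategy as the paper: pull back along a single affine substitution chosen via CRT so that each $r_i$ divides $L_i$ of the shifted variable, divide out, and verify admissibility by splitting on whether $p$ divides the product of the $r_i$. The only difference is cosmetic: the paper works modulo $R^2$ with congruences $L_i(x)\equiv r_i\pmod{r_i^2}$, which lets it take $t_p=0$ uniformly in the degenerate case, whereas you work modulo $R$ with $L_i(\beta)\equiv 0\pmod{r_i}$ and instead exploit that $K_\ell$ is nonconstant modulo $p$ to find a good $t_p$.
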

\begin{proof}
Let $x$ be a solution
of the congruences $L_i(x)\equiv r_i \pmod{r_i^2}$ for $1\leq i \leq k$.
Such an $x$ exists by the Chinese Remainder Theorem since $\gcd(a_i,r_i) = \gcd(r_i,r_j)=1$. This $x$ is unique
modulo $r=(r_1r_2\cdots r_k)^2$.
Now define a new $k$-tuple via $K_i(m) = L_i(rm+x)/r_i$. By construction, we have
$|c_{i,j}r_iK_i-c_{j,i}r_jK_j| = n_{i,j}$, so we only need
to check that this new $k$-tuple is admissible. We will show
that the new $k$-tuple is $p$-admissible for every prime $p$.
There are two cases.

\noindent \textbf{Case 1}: Suppose that $p|r$. Since $\gcd(r_i,r_j)=1$ for $i\neq j$, 
we have that $p|r_\ell$ for exactly one index $\ell$. Now
\[K_\ell(0) = L_{\ell}(x)/r_\ell \equiv 1 \pmod{r_{\ell}} \]
so $K_\ell(0)\equiv 1 \not\equiv 0 \pmod{p}$. We claim that also 
$K_i(0)\not\equiv 0 \pmod{p}$  when $i\neq \ell$. Suppose, by way of 
contradiction, that $K_i(0)\equiv 0 \pmod{p}$ for some $i\neq \ell$. Then $L_i(x) \equiv 0 \pmod{p}$ since $r_i \not\equiv 0 \pmod{p}$, but
$L_{\ell}(x) \equiv r_{\ell}\equiv 0 \pmod{p}$, so 
\[ \det(L_{\ell},L_i)=|a_ib_{\ell} - a_{\ell}b_i| = |a_i L_{\ell}(x) - a_{\ell}L_{i}(x)| \equiv 0 \pmod{p},\]
but this contradicts the assumption that $\gcd(r_{\ell}, \det(L_{\ell},L_i))=1$.
Thus $K_1(0)\cdots K_k(0) \not\equiv 0\pmod{p}$. \\

\noindent \textbf{Case 2}: Now suppose $p\nmid r$. Since $L_1, \ldots, L_k$
is admissible, there is an integer $t_p$ such that $L_1(t_p)\cdots L_k(t_p) \not\equiv 0 \pmod{p}$. Choose $\tau_p$ such that $r\tau_p+x\equiv t_p \pmod{p}$. Then $L_i(r\tau_p+x) \equiv L_i(t_p) \not\equiv 0\pmod{p}$ and $r_i \not\equiv 0\pmod{p}$ for all $i$, so
\[K_1(\tau_p)\cdots K_k(\tau_p) = \frac{L_1(r\tau_p+x)}{r_1}\cdots \frac{L_k(r\tau_p+x)}{r_k} \not\equiv 0\pmod{p}.\]
\end{proof}

Let $n$ be a positive integer and write its prime factorization 
as $n=p_1^{k_1}p_2^{k_2}\cdots p_j^{k_j}$ where the $p_i$ are 
distinct primes with $k_i>0$. Then the \emph{exponent pattern} 
of $n$ is the multiset $\{k_1, k_2, \ldots, k_j\}$ where order 
does not matter but repetitions are allowed. The values of many 
important arithmetic functions depend only on the exponent pattern of the 
input; such functions include:
\[d(x) = \mbox{\# of divisors of $x$}\]
\[\Omega(x) = \mbox{\# of prime factors (counted with multiplicity) of $x$}\]
\[\omega(x) = \mbox{\# of distinct prime factors of $x$}\]
\[\mu(x) = \mbox{M\"{o}bius function} = (-1)^{\omega(x)} \mbox{ if $n$ is squarefree, zero otherwise}\]
\[\lambda(x) = \mbox{Liouville function}= (-1)^{\Omega(x)}\]
Thus if both $x$ and $x+n$ have the same exponent pattern, then
$d(x) = d(x+n)$, $\Omega(x)=\Omega(x+n)$, $\omega(x)=\omega(x+n)$, etc.
In establishing the strong form of the Erd\H{o}s-Mirsky Conjecture (Equation \ref{24}), the authors in \cite{Gold2} actually proved the following result.
\begin{thm}
There are infinitely many positive integers $x$ such that both $x$ and $x+1$ have exponent pattern $\{2,1,1,1\}$.
\end{thm}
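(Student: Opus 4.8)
The plan is to manufacture the two consecutive integers as (a small fixed factor)$\,\times\,$(a product of two large primes). Concretely, I want an admissible triple whose forms, after multiplication by suitable fixed coefficients, produce two consecutive integers each of the shape $\pi^2\sigma PQ$ with $\pi,\sigma$ small primes and $P,Q$ large primes; such a number has exponent pattern $\{2,1,1,1\}$. The engine is Theorem \ref{basic}: it provides, for infinitely many $x$, two of the three forms that are simultaneously $E_2$ with both prime factors exceeding any prescribed $C$. Since I cannot control \emph{which} of the three pairs the theorem hands me, I will insist that every pair works. This forces a triple in which all three pairwise relations have value exactly $1$ (so that any two scaled forms are genuinely consecutive) and in which each relation coefficient, after adjoining primes, has exponent pattern $\{2,1\}$.

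First I would fix the admissible triple
\[ L_1 = 4m+1, \qquad L_2 = 5m+1, \qquad L_3 = 6m+1. \]
A short check shows this is admissible (it is reduced and $p$-admissible for $p=2,3$, e.g.\ taking $m\equiv 0 \pmod 6$), and that for forms $am+1$, $bm+1$ one has $b(am+1)-a(bm+1)=b-a$, so the minimal relation value is $|b-a|/\gcd(a,b)=1$ for each of the pairs $\{4,5\},\{5,6\},\{4,6\}$. Reading off the relation coefficients, the form $4m+1$ carries the coefficient pair $(5,3)$, the form $6m+1$ carries $(2,5)$, and the form $5m+1$ carries $(4,6)$.

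Next I would apply Theorem \ref{adjoin} to adjoin primes that turn every coefficient into a number of pattern $\{2,1\}$. For the two forms whose coefficient pairs are distinct primes I adjoin a fresh prime square $\ell^2$, sending $(5,3)$ to $(5\ell^2,3\ell^2)$ and $(2,5)$ to $(2(\ell')^2,5(\ell')^2)$, all of pattern $\{2,1\}$. The delicate case is the pair $(4,6)$ attached to $5m+1$, since $4$ and $6$ share the factor $2$; here the nonstandard choice of adjoining parameter $3$ works, because $4\cdot 3 = 2^2\cdot 3$ and $6\cdot 3 = 2\cdot 3^2$ both have pattern $\{2,1\}$, while $\gcd(3,5)=1$ and $\det(L_2,L_1)=\det(L_2,L_3)=1$ make $3$ admissible in the sense of Theorem \ref{adjoin}. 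After verifying the remaining coprimality hypotheses (choosing the $\ell$'s odd, distinct, and different from $3,5$), I obtain an admissible triple $K_1,K_2,K_3$ whose three relations read $|C_iK_i - C_jK_j| = 1$ with all six effective coefficients $C_i$ of pattern $\{2,1\}$.

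Finally I would invoke Theorem \ref{basic} with $C$ larger than every prime occurring in the coefficients. For infinitely many $x$ some pair $K_i(x),K_j(x)$ are $E_2$ with prime factors exceeding $C$; for that pair the relation yields consecutive integers $C_iK_i(x)$ and $C_jK_j(x)$, each equal to $\pi^2\sigma\cdot PQ$ with $\pi,\sigma$ the (distinct) small primes of the coefficient and $P,Q$ the large prime factors, hence each of pattern $\{2,1,1,1\}$. Pigeonholing over the three pairs gives one pair that occurs for infinitely many $x$, and since $K_i(x)\to\infty$ these produce infinitely many distinct examples. The hard part, and the reason a naive symmetric triple fails, is precisely the parity problem: the value-$1$ relations force parity constraints mod $2$ that (for pairwise coprime leading coefficients) put the three forms into distinct residue classes, so they can never be simultaneously odd and the triple is not $2$-admissible. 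The construction above evades this by using the non-coprime leading coefficients $4,5,6$, at the cost of the awkward coefficient pair $(4,6)$, which is then repaired by adjoining $3$ rather than a prime square.
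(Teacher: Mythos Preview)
Your argument is correct and essentially identical to the paper's: your triple $4m+1,\,5m+1,\,6m+1$ is exactly the intermediate triple $M_1,\,M_3,\,M_2$ appearing in Subcase~2a of Theorem~\ref{shift1} with $n=1$, and your device of adjoining $3$ to the form $5m+1$ to send $(4,6)\to(12,18)$ is precisely the paper's substitution $m\mapsto 3m+1$ followed by dividing $M_3$ by $3$. The only difference is cosmetic---you bundle everything into a single invocation of Theorem~\ref{adjoin} with $r_2=3$ and $r_1,r_3$ fresh prime squares, whereas the paper first restricts modulo $3$ by hand and then appeals to Remark~\ref{rem}.
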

We will show that for any shift $n$, there are infinitely many positive integers $x$ such that both $x$ and $x+n$ have a fixed small exponent pattern.
A key tool for doing this is contained in the next remark.

\begin{rem}\label{rem}
Suppose we have an admissible triple of forms $L_i$ with relations $|c_{i,j}L_i-c_{j,i}L_j| = n$. For a given form $L_i$ in the triple, we call $c_{i,j}$ and $c_{i,k}$ where $\{i,j,k\} = \{1,2,3\}$
the \emph{pair of relation coefficients for $L_i$ in the triple}.
Suppose these pairs of relation coefficients for
each form in the triple have matching exponent patterns, i.e.,
$c_{i,j}$ and $c_{i,k}$ have the same exponent pattern with any choices of $i,j,k$ such that
$\{i,j,k\} = \{1,2,3\}$. We then can choose pairwise coprime integers having any desired exponent pattern
which are relatively prime to all linear coefficients and determinants (since determinants of distinct reduced forms are always nonzero). In particular,
we can adjoin integers to the relation coefficients so that the new triple has the property that all of its
relation coefficients have any given exponent pattern $\mathscr{P}$ which contains the exponent patterns of every $c_{i,j}$.
Hence by Theorem \ref{basic}, we would then get infinitely many positive integers $x$ such that both $x$ and $x+n$ have exponent pattern
$\mathscr{P}\cup \{1,1\}$. The proofs of Theorems \ref{shift1} and \ref{shift2} below
will rely heavily on this idea.
\end{rem}

\section{Shifts which are Even or Not Divisible by 15}

\begin{thm}\label{shift1}
Let $n$ be a positive integer with $2|n$ or $15\nmid n$.
Then there are infinitely many positive integers $x$ such that both $x$ and 
$x+n$ have exponent pattern $\{2,1,1,1,1\}$.
\end{thm}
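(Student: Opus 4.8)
The plan is to produce, for the given shift $n$, an admissible triple $L_1,L_2,L_3$ whose three relation values are all equal to $n$ and whose relation coefficients can be matched in the sense of Remark \ref{rem}; Remark \ref{rem} then lets me adjoin prime factors so that all six relation coefficients acquire the common exponent pattern $\mathscr{P}=\{2,1,1\}$, and Theorem \ref{basic} (applied with $C$ larger than every prime dividing a relation coefficient) furnishes infinitely many $x$ for which two of the forms, say $L_j$ and $L_k$, take $E_2$-values with prime factors exceeding $C$. For such an $x$ the two values $c_{j,k}L_j(x)$ and $c_{k,j}L_k(x)$ differ by exactly $n$, and each is a relation coefficient of pattern $\{2,1,1\}$ times an $E_2$-number of pattern $\{1,1\}$ built from two new, distinct large primes, so each has exponent pattern $\mathscr{P}\cup\{1,1\}=\{2,1,1,1,1\}$. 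Since Theorem \ref{basic} does not reveal which pair $\{j,k\}$ occurs, it is essential that \emph{all six} relation coefficients carry the pattern $\{2,1,1\}$, which is precisely why the matching hypothesis of Remark \ref{rem} is required.

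First I would turn the construction of the base triple into an arithmetic problem. Writing $L_i=a_im+b_i$, the identity $|c_{i,j}L_i-c_{j,i}L_j|=n$ forces $c_{i,j}a_i=c_{j,i}a_j$ together with $|c_{i,j}b_i-c_{j,i}b_j|=n$; combining these with the determinant identity $a_1\det(L_2,L_3)-a_2\det(L_1,L_3)+a_3\det(L_1,L_2)=0$, valid for any three forms, shows that the coefficients cannot be chosen freely: a suitable choice of signs must make one of the three products $c_{1,3}c_{2,3}$, $c_{1,2}c_{2,3}$, $c_{2,1}c_{1,3}$ equal to the sum of the other two. I would arrange this automatically by building the relation coefficients out of a triple of positive integers $\{r,s,r+s\}$ that share a common exponent pattern (for instance $\{2,3,5\}$, using $5=2+3$): taking $c_{i,j}$ proportional to these makes both the cocycle condition $c_{1,2}c_{2,3}c_{3,1}=c_{2,1}c_{3,2}c_{1,3}$ and the sum condition hold, while the common pattern supplies the matching needed in Remark \ref{rem}. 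The leading coefficients are then pinned down up to scale by $a_i/a_j=c_{j,i}/c_{i,j}$, and the constants $b_i$ are recovered by solving the resulting consistent, rank-two linear system, which has a one-parameter family of integer solutions coming from translating $m\mapsto m+c$.

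The main obstacle is to choose the scale, the auxiliary factors, and the translation parameter so that the triple is simultaneously reduced and admissible. Because a triple is admissible once it is reduced and $p$-admissible for $p\le 3$, only $p=2$ and $p=3$ need attention, and here the parity problem intervenes: reducing the relations modulo $2$ shows that the naive configuration built from $\{2,3,5\}$ forces two of the constants $b_i$ to have opposite parities exactly when $n$ is odd, which destroys either reducedness or $2$-admissibility. I expect to resolve this by splitting on the hypothesis. When $2\mid n$ the congruences modulo $2$ become compatible with reducedness, and one can take all three leading coefficients even (indeed with equal $2$-adic valuation, so that the relation coefficients stay odd) to make $2$-admissibility automatic; the analogous difficulty at $3$ is handled by putting $3$ into the leading coefficients when $3\nmid n$ and otherwise keeping them coprime to $3$ and placing the bad residues, and similarly one must keep the leading coefficients coprime to whichever of $3,5$ divides $n$ so that reducedness is not forced to fail.

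The genuinely hard part is the odd case, where the relations cannot be made to vanish modulo $2$ and the simple sum-relation configuration above provably cannot be repaired; here a different, more delicate choice of the triple $\{r,s,r+s\}$ and of the factorizations of the leading coefficients is needed so as to dodge the primes dividing $n$ at $p=2$ and $p=3$ while retaining matched coefficients of pattern contained in $\{2,1,1\}$. I anticipate that this can always be arranged precisely when $15\nmid n$, and that odd multiples of $15$ are exactly the residual obstruction this construction cannot surmount, which is why they are deferred to Theorem \ref{shift2}. Thus the heart of the argument is a finite case analysis modulo $2$ and $3$ showing that, under the hypothesis $2\mid n$ or $15\nmid n$, a reduced admissible base triple with matched relation coefficients of pattern contained in $\{2,1,1\}$ always exists; the parity problem at $p=2$ is the decisive difficulty throughout.
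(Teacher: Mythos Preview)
Your overall strategy is exactly the one the paper uses: start from a base triple with relation coefficients built out of the additive triple $\{2,3,5\}$ (the paper takes $L_1=2m+n$, $L_2=3m+n$, $L_3=5m+2n$, giving $3L_1-2L_2=5L_1-2L_3=3L_3-5L_2=n$), then massage the triple by substitution and division so that it becomes reduced and $p$-admissible for $p=2,3$ while the pairs of relation coefficients for each form acquire matching exponent patterns contained in $\{2,1,1\}$, and finally invoke Remark~\ref{rem}. Your analysis of the constraints (the cocycle and sum conditions on the $c_{i,j}$, the reduction of admissibility to $p\le 3$) is accurate.

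The genuine gap is that you never actually produce the triples. You write that you ``anticipate'' the odd case can be handled when $15\nmid n$ and that ``the heart of the argument is a finite case analysis,'' but you do not perform that analysis, and it is not routine. In the paper the odd case alone splits into two subcases, and the second (odd $n$ with $3\mid n$, hence $5\nmid n$) requires a substitution of the shape $m\mapsto 15m-4n$ that simultaneously forces a factor of $3$ out of one form and a factor of $5$ out of another, precisely so that the relation-coefficient patterns for $L_1$ can be rebalanced; this step also introduces a dependence on $g_9=\gcd(9,n)$ and a further split when verifying $3$-admissibility. None of this is visible from the general framework you set up, and the specific choices are not determined by the cocycle/sum constraints alone. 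Likewise, your proposed handling of the even case (``take all three leading coefficients even with equal $2$-adic valuation'') is a plausible heuristic but is not what the paper does and would still need the $3$-admissibility and matching checks spelled out; the paper instead reduces by $g_2,g_3,g_5$ and, when $n\equiv 2\pmod 4$, makes a further substitution $m\mapsto 4m+n_2$ to repair the pattern mismatch. In short, your plan is correct, but the proof is precisely the case analysis you have deferred; until those explicit triples are written down and checked, the argument is incomplete.
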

\begin{proof}
Consider the following triple of linear forms: $L_1 =2m+n, L_2=3m+n,$ and $L_3=5m+2n$. We have the relations
\begin{align*}
3L_1 -2L_2&=n \\
5L_1-2L_3&=n \\
3L_3-5L_2&=n
\end{align*}
\noindent Now define $g_i = \gcd(i,n)$ and reduce the linear forms: take $\widetilde{L}_1 = L_1/g_2$, $\widetilde{L}_2 = L_2/g_3$, and $\widetilde{L}_3 = L_3/g_5$.
Then the relations become
\begin{align*}
3\cdot g_2 \widetilde{L}_1 - 2\cdot g_3\widetilde{L}_2&=n \\
5\cdot g_2\widetilde{L}_1 - 2\cdot g_5\widetilde{L}_3&=n \\
3\cdot g_5\widetilde{L}_3 - 5\cdot g_3\widetilde{L}_2&=n   
\end{align*}
\noindent \textbf{Case 1}: Suppose $n$ is even and write $n=2n_2$. Then $g_2=2$, so $\widetilde{L}_1 = m+n_2$, $\widetilde{L}_2 = (3/g_3)m+2(n_2/g_3)$, and $\widetilde{L}_3 = (5/g_5)m+4(n_2/g_5)$. \\

\noindent \textbf{Subcase 1a}: Suppose $2 \mid n_2$. Then 
$$\widetilde{L}_1(1)\widetilde{L}_2(1)\widetilde{L}_3(1) \equiv 1^{3} \not\equiv 0 \pmod{2},$$ so the triple $\widetilde{L}_1$, $\widetilde{L}_2$, $\widetilde{L}_3$ is 2-admissible. Now we check this triple is also $3$-admissible (and therefore admissible).

\hspace{1 in}

\noindent $\bullet$ If $3 \nmid n_2$, then
    $$\widetilde{L}_1(0)\widetilde{L}_2(0)\widetilde{L}_3(0) \equiv n_2(-n_2)(n_2/g_5) \not\equiv 0 \pmod{3}.$$
\noindent $\bullet$ If $3 \mid n_2$, then $g_3=3$, so
$\widetilde{L}_1 \equiv m \equiv \pm \widetilde{L}_3 \pmod{3}$. Now choose $m_0 \in \{1,-1\}$ such that $\widetilde{L}_2(m_0) \not\equiv 0 \pmod{3}$. Then
     $$\widetilde{L}_1(m_0)\widetilde{L}_2(m_0)\widetilde{L}_3(m_0) \equiv m_0 \cdot \widetilde{L}_2(m_0)\cdot (\pm m_0)  \not\equiv 0 \pmod{3}.$$
Here the relation coefficients match in pairs for a given form
in the triple and all have exponent patterns contained 
in $\{1,1\}$, so by appeal to Remark \ref{rem} we have a slightly stronger result, namely, there are infinitely many positive 
integers $x$ such that both $x$ and $x+n$ have exponent pattern 
$\{1,1,1,1\}$. \\

\noindent \textbf{Subcase 1b}: Suppose now $2\nmid n_2$.
Let
\begin{align*}
    K_1 &= \widetilde{L}_1(4m+n_2)/2 = 2m + n_2\\
    K_2 &= \widetilde{L}_2(4m+n_2) = 4\cdot \frac{3}{g_3}m + 5\cdot \frac{n_2}{g_3} \\
    K_3 &= \widetilde{L}_3(4m+n_2) = 4\cdot \frac{5}{g_5}m + 9\cdot \frac{n_2}{g_5}
\end{align*}
Our relations thus become
\begin{align*}
    2^{2}\cdot 3K_1 - 2\cdot g_3K_2 &= n \\
    2^{2}\cdot 5K_1 - 2\cdot g_5K_3 &= n \\
    3\cdot g_5K_3 - 5\cdot g_3K_2 &= n
\end{align*}
Here the pairs of relation coefficients for each form in the triple have matching exponent patterns.
We will check that the triple $K_1, K_2, K_3$ is admissible. First, we note that each form is still reduced: 
$$ K_1 = 2m + n_2$$
is reduced since $2 \nmid n_2$.

$$K_2 = 4\cdot \frac{3}{g_3}m + 5\cdot \frac{n_2}{g_3}$$
is reduced since the constant term is odd and not divisible by $3$ if $g_3 = 1$.
\\
$$K_3 = 4\cdot \frac{5}{g_5}m + 9\cdot \frac{n_2}{g_5}$$
is reduced since the constant term is odd and not divisible by $5$ if $g_5 = 1.$\\

\noindent Next $K_1K_2K_3 \equiv 1\pmod{2}$, so the triple is indeed 2-admissible. Now we check that this triple is 3-admissible.

\hspace{1 in}

\noindent $\bullet$ If $3 \nmid n_2$, then $g_3=1$, so
$$K_1(-n_2)K_2(-n_2)K_3(-n_2)\equiv (-n_2)^2(n_2/g_5) \not\equiv 0 \pmod{3}$$
\noindent $\bullet$ If $3 \mid n_2$, then $K_1 K_3 \equiv \pm m^2 \pmod{3}$. Choose $m_0 \in \{1, -1\}$ such that
$K_2(m_0) \not\equiv 0 \pmod{3}$. Then
$$K_1(m_0)K_2(m_0)K_3(m_0)\equiv \pm (m_0)^2K_2(m_0) \not\equiv 0 \pmod{3}$$.

\noindent Here the relation coefficients all have exponent patterns contained
in $\{2,1,1\}$, so adjoining primes again gives us the statement of the theorem. \\

\noindent \textbf{Case 2:} Now suppose $n$ is odd, so $g_2=1$ from now on. 
Our relations for $\widetilde{L}_i$ become
\begin{align*}
3\widetilde{L}_1 - 2\cdot g_3\widetilde{L}_2&=n \\
5\widetilde{L}_1 - 2\cdot g_5\widetilde{L}_3&=n \\
3\cdot g_5\widetilde{L}_3 - 5 \cdot g_3 \widetilde{L}_2&=n  
\end{align*}
If we look at this modulo $2$, we get $\widetilde{L}_1 \equiv 1, \widetilde{L}_2 \equiv m+1, \widetilde{L}_3 \equiv m.$
Thus this triple is not 2-admissible here. However, we can restrict $m \pmod{2}$ and reduce to get $2$-admissible. To do this, we write
\begin{align*}
    M_1 &= \widetilde{L}_1(2m) = 4m + n \\
    M_2 &= \widetilde{L}_2(2m) = 2 \cdot \frac{3}{g_3}m + \frac{n}{g_3} \\
    M_3 &= \widetilde{L}_3(2m)/2 = \frac{5}{g_5}m + \frac{n}{g_5}.
\end{align*}
The triple $M_1, M_2, M_3$ has reduced forms and is $2$-admissible with relations
\begin{align*}
    3M_1 - 2\cdot g_3M_2 &= n \\
    5M_1 - 2^2 \cdot g_5 M_3 &= n \\
    2\cdot 3\cdot g_5 M_3 - 5\cdot g_3 M_2 &= n
\end{align*}
Note, however, that the relation coefficients for $M_3$ do not have matching
exponent patterns. We can remedy this by restricting and reducing modulo $3$. \\

\noindent \textbf{Subcase 2a}: Suppose $3 \nmid n$, so $g_3=1$. Take
\begin{align*}
    N_1 &= M_1(3m+n) = 12m+5n \\
    N_2 &= M_2(3m+n) = 18m +7n\\
    N_3 &= M_3(3m+n)/3 = \frac{5}{g_5}m+2\cdot \frac{n}{g_5}
\end{align*}
Now we get relations
\begin{align*}
    3N_1 - 2 N_2 &= n \\
    5N_1 - 2^2 \cdot 3\cdot g_5 N_3 &= n \\
    2\cdot 3^2\cdot g_5 N_3 - 5 N_2 &= n
\end{align*}
All these forms are reduced and the triple is still $2$-admissible since $N_1(1)N_2(1)N_3(1)\equiv 1^3\not\equiv 0 \pmod{2}$. In fact, the triple
is $3$-admissible too since
$$N_1(0)N_2(0)N_3(0) \equiv (-n)(n)(-n/g_5) \not\equiv 0 \pmod{3}.$$
Here the relation coefficients all have exponent 
patterns contained
in $\{2,1,1\}$, so adjoining primes again gives us the statement of the 
theorem. In fact, if we also have $5\nmid n$ here,
then the relation coefficients all have exponent patterns contained
in $\{2,1\}$ so we get infinitely many positive integers $x$ such that $x$ and $x+n$ both have exponent pattern $\{2,1,1,1\}$.\\

\noindent \textbf{Subcase 2b}: Suppose now $3 \mid n$, so $5 \nmid n$ by our assumption
that $15\nmid n$. We still must factor out a $3$ from $M_3$, but doing so
will force us to also factor out a $3$ from $M_1$ which then tells us to also 
factor out a $5$ from $M_1$ to make its pair of relation coefficients in the triple have
matching exponent patterns. Thus we will restrict modulo $15$: write $n=3n_3$ and take
\begin{align*}
    J_1 &= M_1(15m-4n)/15 = 4m-n \\
    J_2 &= M_2(15m-4n)/(g_9/3) = 10\cdot \frac{9}{g_9} m -23 \cdot \frac{n}{g_9} \\
    J_3 &= M_3(15m-4n)/3 = 25m-19n_3
\end{align*}
where, as indicated above, $g_9 = \gcd(9,n)$ which is either $3$ or $9$ in this 
case. Here we have relations
\begin{align*}
    3^2\cdot 5 J_1 - 2\cdot g_9 J_2 &= n \\
    3\cdot 5^2 J_1 - 2^2 \cdot 3 J_3 &= n \\
    2\cdot 3^2 J_3 - 5\cdot g_9 J_2 &= n
\end{align*}
All the forms are reduced (since $5\nmid n$) and the triple is $2$-admissible since $J_1(0)J_2(0)J_3(0) \equiv 1^3\not\equiv 0 \pmod{2}$. \\

\noindent Now we check that this triple is 3-admissible.

\hspace{1 in}

\noindent $\bullet$ If $3 \nmid n_3$, then $g_9=3$, so
$$J_1(-n_3)J_2(-n_3)J_3(-n_3)\equiv (-n_3)(n_3)^2 \not\equiv 0 \pmod{3}.$$

\noindent $\bullet$ If $3 \mid n_3$, then $g_9=9$ so
$J_1 J_3 \equiv m^2 \pmod{3}$. Choose $m_0 \in \{1,-1\}$ such that
$J_2(m_0) \not\equiv 0 \pmod{3}$. Then
$$J_1(m_0)J_2(m_0)J_3(m_0)\equiv (m_0)^2J_2(m_0) \not\equiv 0 \pmod{3}.$$

\noindent Here the relation coefficients all have exponent patterns contained
in $\{2,1,1\}$ (or even in $\{2,1\}$ in the case that $9|n$), so adjoining primes again gives us the statement of the theorem.
\end{proof}
\begin{rem}
If we assume the twin prime conjecture, then for any positive integer $n$,
there are primes $p$ and $p+2$ such that neither divide $15n$. In this case,
we can use the following triple: $L_1=2m+n$, $L_2=pm+n(p-1)/2$, $L_3=(p+2)m+n(p+1)/2$. Building off this triple will show---as in Subcase 2a above---that there
are infinitely many positive integers $x$ such that $x$ and $x+n$ both have exponent pattern $\{2,1,1,1\}$. We will not include the details here since we
give an unconditional proof of a result for the remaining case not covered by Theorem \ref{shift1}.
\end{rem}
\section{Shifts which are Odd and Divisible by 15}
\begin{thm}\label{shift2}
Let $n$ be a positive integer with $2\nmid n$ and $15|n$. Then there 
are infinitely many positive integers $x$ such both $x$
and $x+n$ have exponent pattern $\{3,2,1,1,1,1,1\}$.
 \end{thm}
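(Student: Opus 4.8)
The plan is to run the same machinery used for Theorem \ref{shift1}: produce an admissible triple $L_1,L_2,L_3$ with all three relation values equal to $n$, arrange that the pair of relation coefficients attached to each form has a common exponent pattern contained in some fixed $\mathscr{P}$, and then invoke Remark \ref{rem} together with Theorem \ref{basic}. Since here $\mathscr{P}\cup\{1,1\}$ must equal $\{3,2,1,1,1,1,1\}$, the target is $\mathscr{P}=\{3,2,1,1,1\}$. I would begin exactly as in Case 2 of Theorem \ref{shift1}: because $n$ is odd with $3\mid n$ and $5\mid n$, reduce the triple $2m+n,\,3m+n,\,5m+2n$ by the forced factors $g_3=3$ and $g_5=5$ and restrict $m$ to an even residue to recover $2$-admissibility, arriving at $M_1=4m+n$, $M_2=2m+n/3$, $M_3=m+n/5$ with relations $3M_1-6M_2=n$, $5M_1-20M_3=n$, $30M_3-15M_2=n$. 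The pairs for $M_1$ and $M_2$ already match, but the pair $\{20,30\}$ for $M_3$ does not, and this mismatch is the whole difficulty.

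Next I would isolate why this case is genuinely harder than those in Theorem \ref{shift1}; this is the parity problem. Because $n$ is odd, at a common $2$-admissible point all three forms are odd, so in each relation exactly one relation coefficient is even. Moreover the values $\ord_2(a_i)$ of the reduced leading coefficients must be pairwise distinct (otherwise some relation coefficient $n\,a_j/\det(L_i,L_j)$ fails to be an integer, or a relation acquires two odd coefficients, impossible since $n$ is odd), so after reduction they are $0,1,2$ in some order. This forces one relation coefficient to be divisible by $4$, so its exponent pattern contains a $2$ coming from the prime $2$. To match this coefficient with its partner at the same form, the partner must also carry a square, and --- crucially --- when $\gcd(n/15,15)=1$ that square cannot be supplied by $n$ itself and must be built into a leading coefficient.

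The heart of the proof is then to carry out this balancing. Introducing an odd prime square into one leading coefficient repairs one form, but by reducedness (a prime $p$ dividing $a_i$ forces $\ord_p$ of both relation coefficients of $L_i$ to equal $\ord_p(n)$) the extra prime power reappears in a coefficient at an adjacent form, which must in turn be balanced; chasing this around the triangle forces an odd prime to enter to the third power, so the relation coefficients land in $\{3,2,1,1,1\}$ rather than in $\{2,1,1\}$. Concretely I would take the reduced leading coefficients to have the shape $5\cdot(\ast)$, $2\cdot3\cdot(\ast)$, $4\cdot(\ast)$, the starred factors absorbing the compensating square and cube, split into subcases according to $\ord_3(n)$ and $\ord_5(n)$ (that is, whether $9\mid n$ and whether $25\mid n$), and in each subcase exhibit explicit constants $b_i$ making all three forms reduced and the tuple $2$- and $3$-admissible (admissibility beyond $3$ being automatic for a triple). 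The bookkeeping is exactly the check that the three relation-coefficient pairs simultaneously match with common pattern contained in $\{3,2,1,1,1\}$.

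Finally, with such a triple in hand I would apply Remark \ref{rem}: adjoin pairwise coprime integers, relatively prime to the linear coefficients and determinants, so that every relation coefficient acquires the exponent pattern $\mathscr{P}=\{3,2,1,1,1\}$. Theorem \ref{basic} then yields infinitely many $x$ for which two of the forms are $E_2$-numbers, and the corresponding pair of values $c_{j,k}r_jK_j(x)$ and $c_{k,j}r_kK_k(x)$ differ by $n$ and both have exponent pattern $\mathscr{P}\cup\{1,1\}=\{3,2,1,1,1,1,1\}$. The main obstacle, and the reason the pattern is larger than in Theorem \ref{shift1}, is precisely the simultaneous matching of all three coefficient pairs under the parity obstruction: balancing the unavoidable square at the prime $2$ by odd-prime squares and a cube while keeping every form reduced and admissible.
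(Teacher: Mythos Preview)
Your plan diverges from the paper's in an essential way. You aim to repair the mismatched pair $\{20,30\}$ at $M_3$ by rebuilding the triple with leading coefficients of a prescribed shape, splitting into subcases on $\ord_3(n)$ and $\ord_5(n)$. The paper instead applies Theorem~\ref{basic} \emph{twice}. First it applies it to the admissible triple $m,\,m+4,\,m+10$ to produce two $E_2$-numbers $q_1=p_{1,1}p_{1,2}$ and $q_2=p_{2,1}p_{2,2}$ with $q_2-q_1=2j\in\{4,6,10\}$ and all four primes coprime to $2n$. These auxiliary primes then become the raw material: the paper builds a fresh triple with leading coefficients $q_1,\,2q_2,\,4j/g$ and relation value $n$, and balances the three coefficient pairs by adjoining powers of the $p_{i,j}$ themselves via Theorem~\ref{adjoin}. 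The cascade you anticipate (a forced factor $2^2$ triggering an odd-prime square and then a cube) is exactly what surfaces in the paper's case $j=2$, where the pair at $K_1$ becomes $\{2p_{2,1}^3p_{2,2}^2,\,2^3p_{2,1}^2p_{2,2}\}$; but the primes are supplied by $q_1,q_2$, and no case split on the $3$- or $5$-adic valuation of $n$ is ever needed.

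Your proposal also has a genuine gap. Once the pair at $M_3$ fails to match, Theorem~\ref{adjoin} alone cannot repair it, since adjoining multiplies both members of a pair by the same $r_i$; you correctly note that the triple must be rebuilt. But you only assert that suitable starred factors and constants $b_i$ exist in each of the four subcases, without exhibiting any of them. The constraints are tighter than the outline suggests: each $c_{i,j}=na_j/\det(L_i,L_j)$ must be an integer, so all three determinants are coupled to all three leading coefficients, and in the worst subcase $\ord_3(n)=\ord_5(n)=1$ the primes $3$ and $5$ cannot freely contribute the needed square or cube without spoiling reducedness or a determinant divisibility. It is not clear your four-way split terminates in a valid admissible triple rather than proliferating further; nor is your claim that a cube is \emph{forced} evidently correct (in the paper's case $j\neq 2$ the pairs land in $\{2,1,1,1\}$ with no cube). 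The paper's device of importing four fresh primes coprime to $2n$ from an independent application of Theorem~\ref{basic} is precisely what sidesteps this bookkeeping.
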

\begin{proof}
By considering the admissible triple $m, m+4, m+10$, we find that for any constant 
$C$ there are infinitely many pairs of $E_2$ numbers each having prime factors bigger than $C$ and which are a distance of either $4$, $6$, or $10$ apart. In particular,
there are odd $E_2$ numbers $q_1$, $q_2$ such that $\gcd(q_i,n)=1$ for $i=1,2$
and $q_2 = q_1+2j$ where $j\in \{2,3,5\}$. Thus we may write $q_1=p_{1,1}p_{1,2}$
and $q_2=p_{2,1}p_{2,2}$ where $p_{1,1}$, $p_{1,2}$, $p_{2,1}$, and $p_{2,2}$ are all
distinct primes, none of which divide $2n$. There are integers $a$, $b$ with $a$ even
and $b$ odd such that $-aq_2+bq_1=1$.
Write 
$a=2a_2$ and define the triple of linear forms
\begin{align*}
    L_1 &= q_1m+a_2n \\
    L_2 &= 2q_2m+bn \\
    L_3 &= 4\cdot \frac{j}{g} m +(b-a)\frac{n}{g}
\end{align*}
where $g=1$ if $j=2$ and $g=j$ otherwise. Now we check 
that this triple is admissible. We only need to check for $2$-admissible 
and $3$-admissible since each form is reduced by construction. The triple is $2$-admissible since $L_1\cdot L_2 \cdot L_3 \equiv L_1 \cdot 1 \cdot 1 \pmod{2}$. To check the triple is $3$-admissible, choose $m_0 \in \{1,-1\}$ with
$L_3(m_0) \not\equiv 0 \pmod{3}$. Then $L_1(m_0) L_2(m_0) L_3(m_0) \equiv (q_1m_0)(-q_2m_0)L_3(m_0) \not\equiv 0 \pmod{3}$. Moreover, the triple satisfies the
relations
\begin{align}
    q_1L_2-2q_2L_1 &= n \\
    gq_1L_3-2^2jL_1 &= n \\
    gq_2L_3-2jL_2 &= n
\end{align}
However, the pairs of relation coefficients for $L_1$, $L_2$ do not have matching
exponent patterns in the triple, so we will need to adjoin primes using Theorem \ref{adjoin}.
We will break up the proof into cases depending on the value of $j$, but in both cases
we need to note that the pairwise determinants are relatively prime to the integers
we want to adjoin:
\begin{align*}
\det(L_1,L_2) &= q_1bn-2a_2nq_2 = n \\
\det(L_1,L_3) &= q_1(b-a)\frac{n}{g}-4a_2n\cdot \frac{j}{g} = \frac{n}{g}\\
\det(L_2,L_3) &= 2q_2(b-a)\frac{n}{g}-4bn\cdot \frac{j}{g} = 2\cdot \frac{n}{g}
\end{align*}

\noindent \textbf{Case 1}: Suppose $j=2$, so $g=1$.

We apply Theorem \ref{adjoin} directly with $r_1=p_{2,1}^2p_{2,2}$, $r_2=p_{1,1}$, and $r_3=1$, so
we get a new admissible triple of forms $K_i$ which satisfies the following relations:

\begin{align*}
    |p_{1,1}^2p_{1,2}K_2-2p_{2,1}^3p_{2,2}^2K_1| &= n \\
    |q_1K_3-2^3p_{2,1}^2p_{2,2}K_1| &= n \\
    |q_2K_3-2^2p_{1,1}K_2| &= n.
\end{align*}
Here the relation coefficients of $K_1$ both have exponent pattern $\{3,2,1\}$, the relation coefficients of $K_2$ both have exponent pattern $\{2,1\}$, and the relation coefficients of $K_3$ both have exponent pattern $\{1,1\}$. Thus by another application of Theorem \ref{adjoin} via Remark \ref{rem} we can arrange
an admissible triple with common relation value $n$ and all relation 
coefficients having exponent pattern $\{3,2,1,1,1\}$ (or even $\{3,2,1,1\}$
in this case). \\ 

\noindent \textbf{Case 2}: Suppose $j\neq 2$, so $g=j$.
We apply Theorem \ref{adjoin} directly with $r_1=p_{2,1}$, and $r_2=r_3=1$, so
we get a new admissible triple of forms $K_i$ which satisfies the following relations:

\begin{align*}
    |q_1K_2-2p_{2,1}^2p_{2,2}K_1| &= n \\
    |jq_1K_3-2^2jp_{2,1}K_1| &= n \\
    |jq_2K_3-2jK_2| &= n.
\end{align*}
Here the relation coefficients of $K_1$ both have exponent pattern $\{2,1,1\}$, the relation coefficients of $K_2$ both have exponent pattern $\{1,1\}$, and the relation coefficients of $K_3$ both have exponent pattern $\{1,1,1\}$. Thus by appeal to Theorem \ref{adjoin} via Remark \ref{rem} we can arrange
an admissible triple with common relation value $n$ and all relation 
coefficients having exponent pattern $\{3,2,1,1,1\}$ (or even $\{2,1,1,1\}$
in this case). \\

Therefore, in either case, there are infinitely many pairs of positive 
integers both having exponent pattern $\{3,2,1,1,1,1,1\}$ which are a 
distance of $n$ apart.
\end{proof}

\bibliographystyle{amsalpha}
\providecommand{\bysame}{\leavevmode\hbox to3em{\hrulefill}\thinspace}
\providecommand{\MR}{\relax\ifhmode\unskip\space\fi MR }
\providecommand{\MRhref}[2]{%
  \href{http://www.ams.org/mathscinet-getitem?mr=#1}{#2}
}
\providecommand{\href}[2]{#2}

\end{document}